\newcommand\CC{{\mathbb C}}
\newcommand\DD{{\mathbb D}}
\newcommand\GN{{\mathcal N}}
\newcommand\NN{{\mathbb N}}
\newcommand\RR{{\mathbb R}}
\newcommand\QQ{{\mathbb Q}}
\newcommand\GS{{\mathcal S}}
\newcommand\MS{\mathbb S}
\newcommand\TT{{\mathbb T}}
\def\beq{\begin{equation}}
\def\eeq{\end{equation}}
\newtheorem{thm}{Theorem}[section]
\newtheorem{lem}[thm]{Lemma}
\newtheorem{cor}[thm]{Corollary}
\newtheorem{rem}[thm]{Remark}
\newtheorem{ex}[thm]{Example}
\newcommand\beginpf{\noindent {\bf Proof:} \quad}
\newcommand\re{\mathop{\rm Re}\nolimits}
\newcommand\im{\mathop{\rm Im}\nolimits}
\def\beginpf{\begin{proof}}
\def\endpf{\end{proof}}
\renewcommand\phi{\varphi}
\newcommand{{\centering\input{.pstex_t}}}[1]{{\centering\input{#1.pstex_t}}}
\newcommand\ol{\overline}
\newcommand\Hol{\mathop{\rm Hol}}
\newcommand\Poa{\Psi_{\omega,\alpha}}
\begin{document}
\title[Phase retrieval on circles and lines]{Phase retrieval on circles and lines}

 \author{I. Chalendar}
\address{Isabelle CHALENDAR, Université Gustave Eiffel, LAMA, (UMR 8050), 
    UPEM, UPEC, CNRS, F-77454, Marne-la-Vallée (France)}
\email{isabelle.chalendar@univ-eiffel.fr}

\author{J. R. Partington}
\address{Jonathan R. PARTINGTON, School of Mathematics, University of Leeds, Leeds LS2 9JT, Yorkshire, U.K.}
\email{j.r.partington@leeds.ac.uk}

 \subjclass[2010]{30D05, 30H10, 94A12}

 \keywords{Hardy space, phase retrieval, inner function, outer function} 
\baselineskip18pt

\bibliographystyle{plain} 

\begin{abstract}
Let $f$ and $g$  be analytic functions  on the open unit disc $\DD$ such that $|f|=|g|$ on a set $A$.  We give an alternative proof of the result of Perez that there exists  $c$ in the unit circle $\TT$ such that  $f=cg$ when $A$ is the union of two lines in $\DD$ intersecting at an angle that is an irrational multiple of $\pi$, and from this deduce a sequential generalization of the result.
Similarly,
 the same conclusion is valid when $f$ and $g$ are in the Nevanlinna class and 
$A$ is the union of the unit circle and an interior circle, tangential or not.
We also provide sequential versions of this result and analyse the case $A=r\TT$. 
Finally, we examine the most general situation when there is equality on two distinct circles in the disc, proving a result
or counterexample for each possible configuration.
 \end{abstract}

 \maketitle
 
 \section{Background}
 
 A fundamental question of phase retrieval is the following:
 {\em For which measurable sets $A \subset \overline\DD$ is an analytic function
 $f \in H^1$ determined uniquely to within a unimodular constant by the
 values of $|f|$ on $A$?} Here $\DD$ denotes the unit disc and $H^1=H^1(\DD)$ the 
 associated Hardy
 class of analytic functions.
 Such questions arise in signal processing, crystallography, quantum mechanics, microscopy, and
 many other applications.\\

 In \cite{PLB17} Pohl,  Li and  Boche worked with the class $H^1_R$ of functions
 in $H^1=H^1(\DD)$ such that the inner factor is a Blaschke product (no singular part).
 They showed (their Theorem 3) that any such function $f$ is determined to within a
 unimodular constant by the values of $|f|$ on $\TT$ and $r\TT$ for any $0<r<1$.
 This restriction on the inner factor was removed by Perez \cite{perez}.

 Then in \cite{JKP20}
Jaming, Kellay and  Perez solved a phase retrieval problem in $L^2(\RR, e^{2c|x|} \, dx)$
for $c>0$. This is isometric by the Fourier transform to the Hardy space of a strip 
$\GS = \{z \in \CC: | \im z| < c \}$ (similar calculations occur in \cite{smith}).

By conformal mapping the phase retrieval problem can then be reformulated on $H^2(\DD)$
in terms of finding an expression for all pairs $F,G \in H^2$ such that $|F|=|G|$ on $(-1,1)$.
For example, with the notation $F^*$ for the function defined by
$F^*(z)=\overline{F(\bar z)}$, they showed that
for $F,G \in H^2$ one has $|F|=|G|$ on $(-1,1)$ if and only if there exist
$u,v \in \Hol(\DD)$ (the space of all
holomorphic functions on $\DD$) such that $F=uv$ and $G=uv^*$.

Earlier results along similar lines are due to McDonald \cite{M04}, who
studied the problem of determining an entire function $f$ when $|f (x)|$
is known for every real $x$.

Finally, a  recent paper of Liehr \cite{liehr} looks at Gabor phase retrieval and Pauli-type uniqueness problems.\\

This paper deals with several cases, for which it provides positive and negative results: intersecting curves, disjoint circles, disjoint lines, and
a full analysis of the two-circle situation. We also show that many results have sequential counterparts, thus extending and generalizing the work of \cite{perez}.
 
 \section{Results}
 
 \subsection{Intersecting curves}\label{sec:2.1}
 
Perez \cite{perez} proved a version of the following result, which generalizes  \cite[Lem.\ 4.5]{JKP20}.
We give an independent  proof, introducing a method that
allows us to provide further extensions.

\begin{thm}\label{thm:first}
Let $f, g: \DD \to \CC$ be  analytic functions and $L_1$, $L_2$ two line segments
in $\DD$,
intersecting at an angle $\theta \in (0, \pi/2)\setminus \pi \QQ$,
such that $|f(z)|=|g(z)|$ for all $z \in L_1 \cup L_2$.
Then $f=\beta g$ for some constant $\beta \in \TT$.
\end{thm}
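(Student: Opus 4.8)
The plan is to show that the meromorphic quotient $\phi=f/g$ is invariant under a Euclidean rotation through angle $2\theta$ about the intersection point of $L_1$ and $L_2$, and then to use the irrationality of $\theta/\pi$ to force $\phi$ to be constant.

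First, if $f\equiv 0$ or $g\equiv 0$ the conclusion is trivial: equality of moduli on $L_1$ then makes one of them vanish on a set with accumulation points, hence identically, and then so does the other. So assume $f,g\not\equiv 0$. Let $p\in\DD$ be the common point of $L_1$ and $L_2$, say $L_i$ has direction $e^{i\alpha_i}$, and let $\sigma_i(z)=p+e^{2i\alpha_i}\overline{(z-p)}$ be the Euclidean reflection across the line through $L_i$; it is an orientation-reversing involution fixing $L_i$ pointwise, and writing $\sigma_i(z)=\overline{\tau_i(z)}$ with $\tau_i$ affine holomorphic one checks that $z\mapsto\overline{f(\sigma_i(z))}$ is holomorphic on $\sigma_i(\DD)$. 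Hence $z\mapsto f(z)\overline{f(\sigma_i(z))}$ and $z\mapsto g(z)\overline{g(\sigma_i(z))}$ are holomorphic on $\Omega_i:=\DD\cap\sigma_i(\DD)$, the intersection of two open unit discs, which is convex, hence connected, and contains $L_i$ (since $\sigma_i$ fixes $L_i$). On $L_i$ we have $\overline{f(\sigma_i(z))}=\overline{f(z)}$ and $|f(z)|=|g(z)|$, so these two holomorphic functions agree on $L_i$; as $L_i$ has accumulation points in the open set $\Omega_i$, the identity theorem gives
\[
f(z)\,\overline{f(\sigma_i(z))}=g(z)\,\overline{g(\sigma_i(z))}\qquad(z\in\Omega_i,\ i=1,2).
\]

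Dividing, $\phi(z)\,\overline{\phi(\sigma_i(z))}=1$ on $\Omega_i$ as meromorphic functions, where $\overline{\phi(\sigma_i(z))}$ denotes $\overline{f(\sigma_i(z))}/\overline{g(\sigma_i(z))}$. The point $p$ lies in the open set $\Omega_1\cap\Omega_2$, and on a neighbourhood of $p$ inside it we obtain $\overline{\phi(\sigma_1(z))}=\overline{\phi(\sigma_2(z))}$, i.e. $\phi\circ\sigma_1=\phi\circ\sigma_2$. Since $\sigma_1$ maps a neighbourhood of $p$ onto a neighbourhood of $p$ and $R:=\sigma_2\circ\sigma_1$ is the rotation about $p$ through angle $\pm 2\theta$, the substitution $z=\sigma_1(w)$ gives $\phi(w)=\phi(Rw)$ near $p$; because $\phi$ is meromorphic on the connected domain $\DD$, this identity persists wherever both sides are defined, and iterating, $\phi=\phi\circ R^n$ for every $n\in\ZZ$, with $R^n$ the rotation about $p$ through $2n\theta$. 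Now the irrationality of $\theta/\pi$ enters: $\{e^{2in\theta}:n\in\ZZ\}$ is dense in $\TT$, so the $R$-orbit of any $w_0\in\DD\setminus\{p\}$ is dense in the circle $C$ about $p$ through $w_0$. Choosing the radius small enough that $C\subset\DD$ and $C$ carries no pole of $\phi$ (only finitely many poles lie in a fixed small closed disc about $p$, so almost every radius works), $\phi|_C$ is continuous and constant on a dense subset, hence constant; then $\phi-\phi(w_0)$ vanishes on $C$ and therefore identically on $\DD$. Thus $\phi\equiv\beta$ for a constant $\beta$, and feeding this back into $\phi(z)\overline{\phi(\sigma_1(z))}=1$ gives $|\beta|=1$, so $\beta\in\TT$ and $f=\beta g$.

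The conceptual heart --- two reflections compose to a rotation, and irrationality promotes invariance under that single rotation to invariance under a dense family of rotations, which kills $\phi$ --- is short. The steps I expect to demand the most care are verifying that $z\mapsto\overline{f(\sigma_i(z))}$ is holomorphic on a \emph{connected} domain meeting $L_i$, so that the identity theorem can be applied and $\phi=\phi\circ R$ propagated across all of $\DD$, and the zero/pole bookkeeping needed to extract genuine constancy from a dense orbit lying on a single circle. These are precisely the points that are automatic in the model case $p=0$, $L_1=(-1,1)$ of \cite[Lem.\ 4.5]{JKP20} (there $\sigma_i(\DD)=\DD$) and that must be handled by hand for general line segments.
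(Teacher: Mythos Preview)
Your proof is correct and takes a genuinely different route from the paper's. The paper argues locally at the intersection point: after reducing to the case where $f,g$ are invertible in the disc algebra of a small disc about $p=0$, it observes that $h=f/g$ is holomorphic and unimodular on both lines, and that if $h(z)=c_0+c_kz^k+O(z^{k+1})$ with $c_k\neq0$ then $h$ multiplies angles at $0$ by $k$; since both image arcs lie in $\TT$, the angle $k\theta$ between them must be an integer multiple of $\pi$, contradicting $\theta\notin\pi\QQ$. Your argument is instead a global symmetry argument: the Schwarz-reflection identities across $L_1$ and $L_2$ combine to show that $\phi=f/g$ is invariant under the Euclidean rotation $R=\sigma_2\circ\sigma_1$ through $2\theta$, and irrationality of $\theta/\pi$ promotes this to invariance under a dense family of rotations, forcing constancy on a circle and hence everywhere. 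The paper's proof is shorter and pinpoints exactly how rational multiples of $\pi$ can fail (one reads off the integer $k$ directly, which feeds straight into Example~\ref{ex:22}), while your approach bypasses the zero-cancellation preamble and the angle-magnification step, trading them for some domain bookkeeping; it also makes the underlying group-theoretic mechanism explicit, which would adapt naturally to other pairs of curves admitting anti-holomorphic reflections.
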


\beginpf
Clearly by considering the functions on a small disc $\{z \in \CC: |z-a| \le r \}$ where 
$a$ is the intersection point of $L_1$ and $L_2$, and by translating and rescaling
as necessary, we may suppose that $f$ and $g$ lie in the disc algebra and the two
lines intersect at $0$.

We may also suppose that the functions are non-vanishing except possibly at $0$.

If they do vanish at $0$, then the orders of the zeros are equal, since for some $N$
$\lim_{z \to 0} |f(z)|/|z|^N $ exists and is nonzero, and the same for $g$. So by
dividing out any the zeros at $0$, and restricting to a smaller disc if necessary, we may
suppose that $f$ and $g$ are invertible functions in the disc algebra.

Now $h:=f/g$ is also in the disc algebra and it has modulus $1$ on both lines $L_1$ and $L_2$.
Thus $h$ maps $L_1$ and $L_2$ into the unit circle in the complex plane.

Suppose that for some $k \ge 1$ we have
\[
h(z)=c_0+c_k z^k + O(z^{k+1})
\]
near $0$, with $|c_0|=1$ and $c_k \ne 0$; 
then near to $0$ it magnifies angles by a factor of $k$, and so
if $h$ is non-constant, then
$k \theta$ must be an integer multiple of $\pi$.
\endpf

\begin{ex}\label{ex:22}
Take $L_1=[-1,1]$ and $L_2=i[-1,1]$. Then 
\[
f(z)=\frac{z^2-2i}{z^2+2i} \qquad \hbox{and} \qquad g(z) =\frac{z^2-3i}{z^2+3i}
\]
are both unimodular on $L_1 \cup L_2$. Similar examples can be constructed for other rational multiples of $\pi$.
\end{ex}

\begin{cor}\label{cor:c1curves}
The conclusions of Theorem \ref{thm:first} also hold if $L_1$ and $L_2$ are two $C^1$ curves
intersecting at an angle $\theta \in (0, \pi/2]\setminus \pi \QQ$. 
\end{cor}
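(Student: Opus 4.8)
The plan is to run the proof of Theorem~\ref{thm:first} essentially verbatim, the only new ingredient being a first-order computation along $C^1$ parametrizations in place of the ``angle magnification'' remark for straight lines. First I would localize at the point $a$ where $L_1$ and $L_2$ meet: passing to a small disc $\{|z-a|\le r\}$ and translating and rescaling, I may assume $a=0$, that $f,g$ lie in the disc algebra, and --- exactly as in the proof of Theorem~\ref{thm:first}, matching the orders of the zeros at $0$ and dividing them out --- that $f$ and $g$ are invertible in the disc algebra. Then $h:=f/g$ is in the disc algebra, $c_0:=h(0)$ satisfies $|c_0|=1$, and $|h|\equiv 1$ on $L_1\cup L_2$ near $0$. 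If $h$ is constant the conclusion follows locally, hence on all of $\DD$ by the identity theorem, so I would assume $h$ non-constant.

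Since $h(0)=c_0\neq 0$, on a possibly smaller disc there is an analytic branch of $\log(\overline{c_0}h)$ vanishing at $0$; set $\psi:=-i\log(\overline{c_0}h)$, so $\psi$ is analytic near $0$, $\psi(0)=0$, $\psi$ is real-valued precisely where $|h|=1$, and $\psi$ is non-constant. Hence $\psi(z)=a_kz^k+O(z^{k+1})$ for some $k\ge 1$ and $a_k\neq 0$. For $j=1,2$ I would parametrize $L_j$ near $0$ by a $C^1$ curve $\g_j$ with $\g_j(0)=0$ and $\g_j'(0)=e^{i\alpha_j}\neq 0$, where by hypothesis $\alpha_1-\alpha_2\equiv\pm\theta\pmod{\pi}$. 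Being $C^1$, $\g_j(t)=e^{i\alpha_j}t+o(t)$, so $\g_j(t)^k=e^{ik\alpha_j}t^k+o(t^k)$ and therefore
\[
\psi(\g_j(t))=a_k e^{ik\alpha_j}t^k+o(t^k)\qquad(t\to 0).
\]
Since $\psi(\g_j(t))\in\RR$ for all small $t$, dividing by $t^k$ and letting $t\to 0^+$ forces $a_ke^{ik\alpha_j}\in\RR$, i.e.\ $\arg a_k+k\alpha_j\in\pi\ZZ$ for $j=1,2$; subtracting gives $k\theta\in\pi\ZZ$, contradicting $\theta\notin\pi\QQ$. So $h$ is constant, equal to some $\beta$ with $|\beta|=1$ (because $|h|=1$ on $L_1$), and $f=\beta g$ near $a$, hence on $\DD$.

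The one genuinely new point --- and the place where care is needed --- is that a $C^1$ curve is not an analytic arc, so one cannot directly say that $h$ ``magnifies angles by a factor $k$''. What rescues the argument is that the expansion $\g_j(t)=e^{i\alpha_j}t+o(t)$, which is exactly the definition of $C^1$ with nonvanishing derivative, already determines the leading term of $\psi\circ\g_j$, after which the arithmetic modulo $\pi$ is identical to the line case. I would also remark in passing that, since $\pi/2\in\pi\QQ$, the hypothesis $\theta\in(0,\pi/2]\setminus\pi\QQ$ coincides with $\theta\in(0,\pi/2)\setminus\pi\QQ$, so there is in fact nothing extra to check at the endpoint.
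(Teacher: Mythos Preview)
Your proof is correct and is precisely the ``minor changes'' the paper alludes to: the paper's own proof is the single sentence ``The conformality argument works with minor changes,'' and you have supplied those changes by replacing the geometric angle-magnification remark with the explicit first-order expansion $\psi(\gamma_j(t))=a_ke^{ik\alpha_j}t^k+o(t^k)$ along the $C^1$ parametrizations. The passage through $\psi=-i\log(\overline{c_0}h)$ is a clean way to convert ``unimodular'' into ``real,'' but the underlying idea is identical to the paper's.
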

\beginpf
The conformality argument works with minor changes.
\endpf

We may prove sequential versions of the above results with the
aid of the following easy lemma.

\begin{lem}\label{lem:little}
Let $h$ be a function holomorphic  on a neighbourhood of $0$, 
$\theta \in [0,2\pi]$ and $(a_n)$ a real sequence
such that $a_n \to 0$ and  $h(a_n e^{i\theta})$ is real for all $n$. Then
all the derivatives of the function $z \mapsto h(z e^{-i\theta})$ at $0$ are real.
\end{lem}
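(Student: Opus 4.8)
The assertion is, after a rotation of the variable, a standard fact about power series — namely that a function holomorphic near $0$ and real on a real sequence tending to $0$ has a real Taylor expansion at $0$ — so the plan is to normalise by that rotation and then prove the normalised statement. For the normalisation I would put $\phi(z):=h(e^{i\theta}z)$, holomorphic on a neighbourhood of $0$; the hypothesis then says precisely that $\phi(a_n)\in\RR$ for every $n$, with $a_n$ real and $a_n\to 0$. Unwinding the rotation, the lemma reduces to showing that all the derivatives $\phi^{(k)}(0)$ are real, equivalently that the Taylor coefficients $c_k$ of $\phi$ at $0$ are real.

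For that, the key step is to bring in the reflected function $\phi^{*}(z):=\overline{\phi(\bar z)}$ (the operation $f\mapsto f^{*}$ from the introduction), which is again holomorphic near $0$, with Taylor coefficients $\overline{c_k}$. On the real axis $\phi^{*}(t)=\overline{\phi(t)}$, so the holomorphic function $\phi-\phi^{*}$ vanishes at each $a_n$; since the $a_n$ accumulate at $0$, a point of the domain of holomorphy, the identity theorem forces $\phi\equiv\phi^{*}$ near $0$, whence $c_k=\overline{c_k}$ and $\phi^{(k)}(0)=k!\,c_k\in\RR$ for all $k$. (If one wishes to avoid $\phi^{*}$, the same follows by induction on the index $m$ of the first non-real coefficient $c_m=\alpha+i\beta$ with $\beta\ne 0$: then $\Im\phi(a_n)=\beta a_n^{m}+o(a_n^{m})\ne 0$ once $a_n$ is small enough, contradicting $\phi(a_n)\in\RR$.)

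There is no real obstacle here — it is, as stated, the ``easy lemma'' — but the one point to be careful about is exactly where the hypothesis $a_n\to 0$ enters: the identity-theorem step (or the induction) needs the zeros $a_n$ of $\phi-\phi^{*}$ to accumulate at a point of the domain, so one should assume the $a_n$ take infinitely many distinct values, i.e.\ are not eventually $0$ — which is harmless and automatic in the sequential applications for which the lemma is intended. Everything else is routine bookkeeping with the rotation.
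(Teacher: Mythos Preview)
Your proof is correct, and your primary route differs from the paper's. You pass to the reflection $\phi^{*}(z)=\overline{\phi(\bar z)}$ and kill $\phi-\phi^{*}$ in one stroke with the identity theorem; the paper instead iterates, observing that $h'(0)$ is a limit of real difference quotients, then replacing $h$ by $z\mapsto (h(z)-h(0))/z - h'(0)$ and repeating to peel off each successive Taylor coefficient. Your parenthetical alternative (a first non-real coefficient $c_m$ would force $\Im\phi(a_n)\ne 0$ for small $a_n$) is essentially that same inductive idea recast in contrapositive form. The reflection argument is slicker and makes the role of the identity theorem explicit; the peeling argument is more hands-on but no longer. Your caveat that the $a_n$ should not be eventually zero applies equally to the paper's proof, which divides by $a_n$. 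One bookkeeping remark: the function you analyse is $\phi(z)=h(e^{i\theta}z)$, whereas the lemma as printed concerns $z\mapsto h(ze^{-i\theta})$; the paper's own ``WLOG $\theta=0$'' reduction and its later application of the lemma both agree with your sign, so this looks like a misprint in the statement rather than a slip on your part.
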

\beginpf
Clearly we may assume without loss of generality
that $\theta=0$. Then
$h'(0) = \lim_{n \to \infty}  h(a_n)/a_n  \in \RR$.
    The map $h_1 : z \mapsto h(z)/z - h'(0)$ is also holomorphic around $0$, and satisfies $h_1(a_n) \in \RR$. Then, in a same way, we get $h_1'(0) = h''(0) \in \RR$.
     Iterating this reasoning, we see that all the derivatives of $h$ at $0$ must be real.
     \endpf
     
     \begin{cor}
Let $f, g: \DD \to \CC$ be  analytic functions and $L_1$, $L_2$ two line segments
in $\DD$,
intersecting at a point $p$, at an angle $\theta \in (0, \pi/2)\setminus \pi \QQ$,
such that there are sequences $(a_n) \subset L_1$ and $(b_n) \subset L_2$
tending to $p$
such that $|f(a_n)|=|g(a_n)|$ and $|f(b_n)|=|g(b_n)|$ for all $n$.
Then $f=\beta g$ for some constant $\beta \in \TT$.
\end{cor}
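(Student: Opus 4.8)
The plan is to follow the same reduction as in the proof of Theorem~\ref{thm:first} and then, since the conformality (angle-magnification) argument is unavailable when $f$ and $g$ are only controlled along sequences, to replace it by an application of Lemma~\ref{lem:little} to a holomorphic branch of $\log(f/g)$.

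First I would localise and normalise. Restricting to a disc centred at $p$ and then translating and rescaling, I may assume $p=0$, that $L_1,L_2$ pass through $0$ in directions $e^{i\theta_1},e^{i\theta_2}$ with $\theta_2=\theta_1+\theta$ (the angle between the segments being defined only modulo $\pi$, any such choice works), and that $a_n=r_ne^{i\theta_1}$, $b_n=s_ne^{i\theta_2}$ for real sequences $r_n,s_n\to0$. Discarding finitely many terms, all $a_n,b_n$ lie in a fixed small disc $D$ around $0$ on which $f$ and $g$ are holomorphic. The possible common zero at $0$ is handled exactly as in Theorem~\ref{thm:first}: if $f$ and $g$ vanish at $0$ to orders $m$ and $m'$, then $|f(a_n)|/|a_n|^m$ and $|g(a_n)|/|a_n|^{m'}$ have nonzero limits, so $|f(a_n)|=|g(a_n)|$ forces $m=m'$; dividing both functions by $z^m$ and shrinking $D$, I may assume $f$ and $g$ are zero-free on $D$. (If $f\equiv0$ then $g$ vanishes on the set $\{a_n\}$, which accumulates at $0\in\DD$, so $g\equiv0$ and the conclusion holds trivially; so assume $f,g\not\equiv0$.)

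Now put $H=f/g$, holomorphic and zero-free on $D$, and let $\Psi$ be a holomorphic branch of $\log H$ on $D$, so that $|H|=e^{\re\Psi}$. Since $|H(a_n)|=|f(a_n)|/|g(a_n)|=1$ and likewise $|H(b_n)|=1$, we get $\re\Psi(a_n)=\re\Psi(b_n)=0$, that is, $i\Psi(a_n)$ and $i\Psi(b_n)$ are real. Applying Lemma~\ref{lem:little} to the holomorphic function $i\Psi$, with the real sequence $(r_n)$ and the angle $\theta_1$, shows that all Taylor coefficients at $0$ of $z\mapsto i\Psi(ze^{-i\theta_1})$ are real; writing $\Psi(w)=\sum_{k\ge0}d_kw^k$, this says $d_ke^{-ik\theta_1}\in i\RR$ for all $k$. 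Using instead $(s_n)$ and $\theta_2$ gives $d_ke^{-ik\theta_2}\in i\RR$ for all $k$.

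To conclude, suppose $d_k\ne0$ for some $k\ge1$. Then $e^{ik(\theta_2-\theta_1)}=e^{ik\theta}$ equals the quotient $(d_ke^{-ik\theta_1})/(d_ke^{-ik\theta_2})$ of two nonzero purely imaginary numbers, hence is real, so $k\theta\in\pi\ZZ$, contradicting $\theta\notin\pi\QQ$. Therefore $d_k=0$ for every $k\ge1$, so $\Psi$, and with it $H=e^\Psi$, is constant on $D$; call the constant $\beta$, which satisfies $|\beta|=|H(a_1)|=1$. Thus $f=\beta g$ on $D$, and since $f$ and $g$ are holomorphic on the connected set $\DD$, the identity theorem gives $f=\beta g$ throughout $\DD$ with $\beta\in\TT$. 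I expect the only points requiring care to be the bookkeeping in the passage to the zero-free case and the degenerate cases, all of which are routine; once $\Psi$ is in hand, the irrationality step is immediate.
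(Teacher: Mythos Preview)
Your argument is correct and follows the same outline as the paper's proof: localise near $p$, reduce to a zero-free quotient $H=f/g$, convert the condition $|H|=1$ along the two sequences into a real-value condition, invoke Lemma~\ref{lem:little} along each line, and compare Taylor coefficients to force $H$ constant unless $\theta\in\pi\QQ$. The only difference is in how the conversion step is carried out: the paper post-composes $H$ with a M\"obius map sending $\TT$ to $\RR\cup\{\infty\}$ to obtain a function $\tilde h$ taking real values on the sequences, whereas you take a holomorphic branch of $\log H$ and use that $i\log H$ is real exactly where $|H|=1$; your route is marginally cleaner, since it sidesteps the need to place the M\"obius pole away from $H(0)$.
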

\beginpf
We use the argument of the proof of Theorem \ref{thm:first} up to the point
where we have that $h(a_n)$ and $h(b_n)$ lie in $\TT$ for all $n$.
By composing with   suitable bilinear transformations we can construct a function $\tilde h$
holomorphic near $0$ with 
sequences $c_n \to 0$ and $d_n \to 0$ such that
the $c_n$ are real and the $d_n$ have argument $\theta$, but with
$\tilde h(c_n)$ and $\tilde h(d_n)$ both real for each $n$.
By Lemma \ref{lem:little}, we conclude that
all the derivatives of $z \mapsto \tilde h(z)$ and $z \mapsto \tilde h(ze^{i\theta})$
are real. By looking at the Taylor series, we see that either $\tilde h$ is constant (and thus $h$ is 
constant) or $e^{in\theta}=1$ for some $n>0$.
\endpf

Clearly, a similar generalization of Corollary \ref{cor:c1curves} can be derived.

\subsection{Disjoint circles}

As described in the introduction to this paper,
Perez \cite{perez} proved a more general version of the main result  from \cite{PLB17},
removing the restriction on the singular factor. We   give an alternative proof,
which leads us to more general results in the same area.

Recall that the Nevanlinna class $\GN$ consists of those holomorphic
functions in the unit disc $\DD$ that are expressible as the ratio of two $H^1$ functions.\\

\begin{thm}\label{thm:twoircles}
Let $f,g \in \GN$ satisfy $|f|=|g|$ a.e.\ on $\TT$ and $r\TT$ for
some $0<r<1$. Then $f=\lambda g$ for some $\lambda \in \TT$.
\end{thm}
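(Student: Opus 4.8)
The plan is to reduce to a statement about a single holomorphic (or meromorphic) function via the quotient $h = f/g$, exactly as in the proof of Theorem~\ref{thm:first}, and then exploit the fact that $h$ now has modulus $1$ on \emph{two full circles} rather than merely on curves through a point. First I would dispose of the zeros: since $|f|=|g|$ a.e.\ on $\TT$, the outer parts of $f$ and $g$ have the same modulus on $\TT$, hence (by the integral formula for outer functions) differ by a unimodular constant; so after replacing $g$ by $\beta g$ we may assume $f$ and $g$ have the same outer factor. Writing $f = \Theta_f F$ and $g = \Theta_g F$ with $F$ outer and $\Theta_f,\Theta_g$ inner, the hypothesis $|f|=|g|$ a.e.\ on $r\TT$ becomes $|\Theta_f| = |\Theta_g|$ on $r\TT$. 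It then suffices to show $\Theta_f = \Theta_g$, i.e.\ to show that an inner function is determined by the modulus of its ratio to another inner function on the circle $r\TT$.

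Next I would pass to $h = \Theta_f/\Theta_g$, a meromorphic function on $\DD$ (ratio of two bounded analytic functions) with $|h| = 1$ a.e.\ on $\TT$ and $|h| = 1$ on all of $r\TT$. The key structural step is to \emph{reflect}: define $\widetilde h$ on the annulus $r < |z| < 1/r$ by setting $\widetilde h(z) = h(z)$ for $r \le |z| < 1$ and $\widetilde h(z) = 1/\overline{h(r^2/\bar z)}$ for $1 \le |z| < 1/r$ (Schwarz reflection across the circle $r\TT$, using $|h|=1$ there). This glues $h$ to a meromorphic continuation past $r\TT$; one then iterates the reflection across $\TT$ (using $|h|=1$ a.e.\ there, which for the ratio of inner functions holds everywhere after accounting for the finitely/countably many boundary singularities — this is the delicate point) to produce a single-valued meromorphic function on a larger region, or more cleanly, to show that $h$ extends to a rational-type automorphic object forcing $h$ to be a finite Blaschke-type quotient with all zeros and poles canceling.

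The cleanest route, and the one I would actually write, avoids heavy continuation: $|h|=1$ on $r\TT$ says that $\log|h|$ is a harmonic function on the annulus $r < |z| < 1$ (away from the zeros/poles of $h$) vanishing on the inner boundary $r\TT$ and, by the inner-function structure, with nonpositive boundary behaviour on $\TT$. Actually $\log|\Theta_f|$ and $\log|\Theta_g|$ are each negative harmonic (plus a nonpositive singular measure contribution) on $\DD$, vanishing nowhere-a.e.\ on $\TT$; their difference $u = \log|h|$ is harmonic on $\DD$ minus the zero sets, vanishes on $r\TT$, and the maximum principle applied separately to the two components $\{|z|<r\}$ and $\{r<|z|<1\}$ — together with the boundary conditions coming from the inner factors — forces $u \equiv 0$. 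Hence $|\Theta_f| = |\Theta_g|$ throughout $\DD$, and an inner function being determined up to a unimodular constant by its modulus gives $\Theta_f = c\,\Theta_g$, whence $f = c g$.

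The main obstacle I anticipate is the boundary behaviour of $h = \Theta_f/\Theta_g$ on $\TT$: as a ratio of inner functions it need not be bounded, and $\log|h|$ may fail to have a harmonic majorant, so the naive maximum principle on the annulus $r<|z|<1$ does not immediately apply. Handling this requires factoring out the common inner part of $\Theta_f$ and $\Theta_g$ first, or working with $|f|^2 - |g|^2$ and a normal-families/subharmonicity argument on the annulus rather than with $\log|h|$ directly; alternatively one invokes the Schwarz-reflection continuation sketched above to convert the two-sided modulus-one conditions into an outright rational identity. Getting this step clean — so that the cancellation of singular inner factors and Blaschke factors is fully justified — is where the real work lies; everything else is the standard "modulus on $\TT$ determines the outer part up to a constant" machinery.
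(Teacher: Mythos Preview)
Your reduction to the case where $f$ and $g$ are both inner is correct and matches the paper exactly. The gap is in everything that follows.

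Neither of your two proposed completions actually goes through as written, and you essentially concede this in your final paragraph. The maximum-principle argument on $u=\log|h|$ fails not just for technical reasons but for a structural one: if $\Theta_f$ or $\Theta_g$ has a nontrivial singular inner factor, then $u$ is genuinely unbounded near $\TT$, and no amount of ``factoring out the common inner part'' repairs this, since the singular measures need not be comparable. Your reflection sketch has the same problem at $\TT$: the ratio of two inner functions has modulus~$1$ only almost everywhere on $\TT$ and may fail to extend across the boundary, so iterating the reflection breaks down precisely where you need it. You have correctly located the obstacle but not removed it.

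The paper's proof supplies the missing idea, and it is different from both of your routes. The key observation is a \emph{finiteness} statement: since $g$ is holomorphic on $\DD$, it has only finitely many zeros in any disc $\{|z|\le r+\delta\}$, so $h=f/g$ is meromorphic there with finitely many poles. Combined with $|h|=1$ on $r\TT$, this forces $h$ to coincide on $r\DD$ with a ratio $B_2(z/r)/B_1(z/r)$ of finite Blaschke products (multiply off the poles, and what remains is holomorphic on a neighbourhood of $r\overline\DD$ and unimodular on $r\TT$, hence a scaled finite Blaschke product). Now comes the decisive step: by analytic continuation the identity $f(z)=\dfrac{B_2(z/r)}{B_1(z/r)}\,g(z)$ holds throughout $\DD$, so $h$ is globally a \emph{rational} function. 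But then the condition $|h|=1$ a.e.\ on $\TT$ (automatic since $f,g$ are inner) would force the zeros and poles of $h$ to sit at inverse points with respect to $\TT$, whereas by construction they sit at inverse points with respect to $r\TT$. These are incompatible unless $h$ is constant. Note that this argument never touches the boundary behaviour of $h$ on $\TT$ directly; it converts the problem into an algebraic constraint on a rational function, which is exactly what your sketch was reaching for but did not isolate.
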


\beginpf
By dividing out by the common outer factor $u$ given by
\[
u(z) = \exp \left( \frac{1}{2\pi} \int_0^{2\pi} \frac{e^{it}+z}{e^{it}-z} \log |f(e^{it})| \, dt \right)
\]
we may suppose without loss of generality that $f$ and $g$ are inner functions.

Now, if there are any zeros of $f$ or $g$ on $r\TT$ they are at the same points, and finite
in number. By dividing out a finite Blaschke product we may suppose without loss of generality that $f$ and $g$ have no zeros on $r\TT$, so indeed $f/g$ is analytic on an annulus
$\{z: r-\delta < |z| < r+\delta \}$ for some $\delta>0$ and unimodular on $r\TT$.

Indeed, $f/g$ has finitely many poles in $\{z: |z| < r+\delta \}$ so, multiplying by
a function $h$ such that $h(z/r)$ is a finite Blaschke product $B_1(z)$, $fh/g$ is a
function $k$ such that $k(z/r)$ is also a finite Blaschke product $B_2(z)$ since it is analytic
in a neighbourhood of the disc $r\TT$.

Our conclusion is that $f/g$ is a rational function $F(z)=B_2(rz)/B_1(rz)$ in $r\DD$; by analytic continuation
(and removing any isolated singularities)
$f=F g$ throughout the whole disc. 
But if $F$ is non-constant then it is not unimodular on $\TT$ as the zeros and
poles are not placed at inverse points.
\end{proof}

With the aid of the following lemma, which gives a slight extension of
\cite[Lem.~1.3]{kamowitz}, we can prove a stronger result.

\begin{lem}\label{lem:annulus}
Let $F$ be a function meromorphic on an open set containing the unit circle $\TT$
and suppose that $|F(z)|=1$ for infinitely many $z \in \TT$. Then
$|F(z)|=1$ for all $z \in \TT$.
\end{lem}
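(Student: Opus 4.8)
The plan is to reduce the statement to a counting argument about zeros using the identity principle for meromorphic functions. First I would consider the auxiliary function $G := F F^* - 1$, where $F^*(z) = \overline{F(1/\bar z)}$ is the usual reflection across $\TT$ (so that $F^*$ is again meromorphic on a neighbourhood of $\TT$, since $F$ is meromorphic on an open set containing $\TT$ and this set can be taken symmetric under $z \mapsto 1/\bar z$). For $z \in \TT$ we have $1/\bar z = z$, hence $F^*(z) = \overline{F(z)}$ and $F(z)F^*(z) = |F(z)|^2$. Thus the hypothesis says $G(z) = 0$ for infinitely many $z \in \TT$.

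Next I would locate an accumulation point: since $\TT$ is compact, the infinite zero set of $G$ on $\TT$ has an accumulation point $z_0 \in \TT$. Now $G$ is meromorphic on a neighbourhood of $\TT$, so in particular it is meromorphic near $z_0$; away from its poles it is holomorphic, and a nonzero holomorphic function has isolated zeros. The one subtlety is that $z_0$ could a priori be a pole of $F$ (hence possibly of $G$), so I would first argue that near $z_0$ the function $G$ is actually holomorphic, or else handle the pole directly: if $F$ has a pole at $z_0$, then $F^*$ has a zero there and one checks $FF^*$ is still holomorphic and nonzero at $z_0$, so $G$ is holomorphic near $z_0$ in every case. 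Then $G$ has a non-isolated zero at $z_0$, so $G \equiv 0$ on the connected component of its domain containing $z_0$; shrinking the domain to a connected annular neighbourhood of $\TT$ if necessary, we get $F F^* \equiv 1$ on a neighbourhood of $\TT$, and restricting to $\TT$ gives $|F(z)|^2 = 1$ for all $z \in \TT$.

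The main obstacle, and the only place requiring care, is the bookkeeping around poles: one must ensure that the domain on which $F$ is meromorphic can be chosen to be a connected (e.g.\ annular) neighbourhood of $\TT$ so that the identity principle propagates the vanishing of $G$ across all of $\TT$, and one must check that $G = FF^* - 1$ is genuinely holomorphic (not merely meromorphic) in a neighbourhood of the accumulation point $z_0$ so that "non-isolated zero $\Rightarrow$ identically zero" applies. Both points are routine once the reflection $F^*$ is set up correctly, since a pole of $F$ of order $m$ at a point $w$ produces a zero of $F^*$ of order $m$ at $1/\bar w$, and on $\TT$ these coincide, making $FF^*$ holomorphic and unimodular there; elsewhere the poles of $F$ and the zeros of $F^*$ (and vice versa) may be separated by shrinking the annular neighbourhood.
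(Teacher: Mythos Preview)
Your reflection argument via $G = FF^* - 1$ is correct in outline and yields a valid proof, but one step of your bookkeeping is wrong: a pole of $F$ of order $m$ at $w$ produces a \emph{pole} (not a zero) of $F^*$ at $1/\bar w$, since $F^*(1/\bar w) = \overline{F(w)} = \infty$. Consequently, if $F$ had a pole at a point $z_0 \in \TT$, then $F^*$ would also have a pole there and $FF^*$ would \emph{not} be holomorphic at $z_0$. Fortunately this case simply cannot occur for your accumulation point: since $|F(z_n)| = 1$ along a sequence $z_n \to z_0$, the point $z_0$ is not a pole of $F$, hence not of $F^*$ either, and $G$ is genuinely holomorphic near $z_0$. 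With that correction the identity principle applies exactly as you describe, and from $FF^* \equiv 1$ on a connected annular neighbourhood of $\TT$ one also sees a posteriori that $F$ can have no poles on $\TT$ at all (a pole of $F$ on $\TT$ would force a pole of $FF^*$ there), so $|F| = 1$ everywhere on $\TT$.

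This is a genuinely different and arguably more direct route than the paper's. The paper works intrinsically on $\TT$: it sets $X = F^{-1}(\TT) \cap \TT$, finds a point $z_0 \in X$ that is simultaneously a limit of points in $X$ and in $\TT \setminus X$, straightens both copies of $\TT$ to $\RR$ by M\"obius maps, and then invokes the earlier Lemma~\ref{lem:little} (real values along a real sequence force all Taylor coefficients real) to derive a contradiction. Your approach replaces this local real-analytic argument by the global Schwarz-reflection identity $FF^* \equiv 1$, which is self-contained and avoids the auxiliary lemma entirely. The paper's method, on the other hand, stays closer to the techniques used elsewhere in the article and makes the connection to Lemma~\ref{lem:little} explicit.
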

\beginpf
This result uses a modification of an argument introduced in \cite[Thm. 1.4]{COP24}. Namely,
we   restrict    $F$ to a continuous map from $\TT$ to 
the Riemann sphere $\CC \cup\{\infty\}$, which we continue to denote by $F$. Then $X = F^{-1}(\TT)$ is a closed subset of $\TT$. 
    
If $X$ is an infinite proper subset of $\TT$, we claim that
$X$ contains a point that is a limit of two sequences, one
in $X$ and one in $\TT \setminus X$. This is clear if 
$\TT \setminus X$ consists of a finite number of intervals.
If there are infinitely many intervals, then their endpoints  lie in $X$ and  
accumulate at a point $z_0 \in X$ with the required property.
    \medskip

    This point satisfies $|g(z_0)| = 1$, and there exist two sequences $(u_n) \subset X$ and $(v_n) \subset \TT \backslash X$  which tend to $z_0$, such that $|F(u_n)| = 1$ and $|F(v_n)| \ne 1$
    (by passing to a subsequence we may avoid any poles of $F$). \medskip
    
    By composing $F$ with conformal mappings between $\TT$ and $\RR \cup \{\infty\}$
    we obtain a  function $h$   holomorphic around $0$,  
    and two sequences $(a_n), (b_n) \subset \RR$ such that $a_n \to 0$, $b_n \to 0$, $h(a_n) \in \RR$ and $h(b_n) \not\in \RR$. By Lemma \ref{lem:little}
    we have that
all the derivatives of $h$ at $0$ must be real, so $h(b_n) \in \RR$ (since $b_n \in \RR$). This contradiction shows that  $X$ cannot be an infinite proper subset of $\TT$.
\endpf

\begin{thm}\label{thm:infsub}
Let $f,g \in H^1(\DD)$ satisfy $|f|=|g|$ a.e.\ on $\TT$ and 
on an infinite subset  $X \subset r\TT$  for
some $0<r<1$. 
Then $f=\lambda g$ for some $\lambda \in \TT$.
\end{thm}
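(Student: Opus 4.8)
The plan is to reduce to the situation of Theorem \ref{thm:twoircles} by showing that equality on an infinite subset $X \subset r\TT$ already forces equality on all of $r\TT$. As in the proof of Theorem \ref{thm:twoircles}, I would first divide out the common outer factor (using the Poisson integral of $\log|f|$ on $\TT$), so that $f$ and $g$ become inner functions, and then divide out any common zeros on $r\TT$ — these are finite in number and occur at the same points, since $|f|=|g|$ on the accumulation set. After this normalization, $h := f/g$ is meromorphic on an annulus $\{z : r-\delta < |z| < r+\delta\}$ for some $\delta>0$, and $|h|=1$ at every point of $X$.

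The key step is then to apply Lemma \ref{lem:annulus} to the function $F(z) := h(rz)$, which is meromorphic on an open set containing $\TT$ and satisfies $|F(z)|=1$ for infinitely many $z \in \TT$ (namely the points $z/r$ for $z \in X$). Lemma \ref{lem:annulus} gives $|F(z)|=1$ for all $z \in \TT$, i.e. $|h(z)|=1$ for all $z \in r\TT$; equivalently $|f|=|g|$ on all of $r\TT$. (One should take a little care that $X$ is genuinely infinite — which it is by hypothesis — and that passing to $F$ does not destroy this; it does not.) At this point the hypotheses of Theorem \ref{thm:twoircles} are satisfied, and I would simply invoke it to conclude $f = \lambda g$ for some $\lambda \in \TT$.

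The only real subtlety is that Theorem \ref{thm:twoircles} is stated for $|f|=|g|$ a.e.\ on $r\TT$, whereas after applying Lemma \ref{lem:annulus} we obtain $|f|=|g|$ at \emph{every} point of $r\TT$ — which is stronger, so no issue arises. A secondary point is to confirm that, after removing the common outer and inner factors, the resulting quotient $h$ is not merely meromorphic near $r\TT$ but genuinely extends meromorphically to a full annulus around $r\TT$; this is exactly the argument already carried out in the proof of Theorem \ref{thm:twoircles}, and it transfers verbatim. I do not anticipate a serious obstacle here: the work has essentially all been done in Lemma \ref{lem:annulus} and Theorem \ref{thm:twoircles}, and the present theorem is the natural bridge between them. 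The one place where care is warranted is ensuring the normalization steps (dividing out outer and inner factors) preserve the infinitude of the set on which $|f|=|g|$ and do not introduce spurious poles of $h$ on $r\TT$ itself — but since common zeros on $r\TT$ are finite in number and are divided out, $h$ has no zeros or poles on $r\TT$, and the argument goes through cleanly.
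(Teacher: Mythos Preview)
Your proposal is correct and follows the same route as the paper: reduce to inner $f,g$ via the outer factorization, apply Lemma~\ref{lem:annulus} (after rescaling $z\mapsto rz$) to upgrade $|f/g|=1$ on the infinite set $X$ to $|f/g|=1$ on all of $r\TT$, and then invoke Theorem~\ref{thm:twoircles}. One small remark: your intermediate step of dividing out common zeros on $r\TT$ is unnecessary, and your justification that zeros ``occur at the same points, since $|f|=|g|$ on the accumulation set'' is not warranted here (you only know $|f|=|g|$ on $X$, not on $r\TT$); the paper bypasses this entirely by observing that once $f,g$ are inner, the quotient $f/g$ is automatically meromorphic on all of $\DD$ (zeros of an inner function are isolated in $\DD$), which is all Lemma~\ref{lem:annulus} requires.
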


\beginpf
Let $f$ and $g$ have inner--outer factorizations $f=u_1 v_1$, $g=u_2v_2$,
where $u_1$ and $u_2$ are inner and $v_1$ are $v_2$ outer with $v_1(0)>0$ and $v_2(0)>0$.
Since
$|f|=|g|$ on $\TT$ we have $v_1=v_2$. We may therefore suppose without
loss of generality that $f$ and $g$ are inner.

Note that the function $F:=f/g$ is meromorphic in the disc
and  $|F(z)|=1$ for an infinite subset of $\TT$.

By Lemma \ref{lem:annulus},  $|F|=1$ on $\TT$ and so
 $|f|=|g|$ on $r\TT$.
The result now follows from Theorem \ref{thm:twoircles}.

\endpf
\begin{cor}	\label{cor:offcentre}
Let $f,g \in H^1(\DD)$ (or more generally in the Nevanlinna class) satisfy $|f|=|g|$ a.e.\ on $\TT$ and 
on an infinite subset  $X \subset \Poa (r\TT)$  for
some $0<r<1$, where $\omega\in \TT$, $\alpha\in \DD$, and $\Poa(z):=\omega\frac{\alpha -z}{1-\overline{\alpha}z}$.
Then $f=\lambda g$ for some $\lambda \in \TT$.	
\end{cor}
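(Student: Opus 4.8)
The plan is to reduce the corollary to Theorem~\ref{thm:infsub} by transporting the hypotheses along the automorphism $\Poa$. Write $\psi:=\Poa$, so $\psi$ is a biholomorphism of $\DD$ onto itself with $\psi(\TT)=\TT$ and $\psi(r\TT)=\Poa(r\TT)$. I would set $\tilde f:=f\circ\psi$ and $\tilde g:=g\circ\psi$. Then $\psi^{-1}(X)$ is an infinite subset of $\psi^{-1}(\Poa(r\TT))=r\TT$ on which $|\tilde f|=|\tilde g|$ by construction; and since $\psi$ restricts to a bi-Lipschitz homeomorphism of $\TT$ that carries nontangential approach regions into nontangential approach regions, the nontangential boundary function of $\tilde f$ coincides a.e.\ with $f\circ\psi$, and as $\psi|_\TT$ preserves $\TT$-null sets we get $|\tilde f|=|\tilde g|$ a.e.\ on $\TT$. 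Once it is known that $\tilde f,\tilde g\in H^1$ (resp.\ in $\GN$), Theorem~\ref{thm:infsub} --- and the remark following Theorem~\ref{thm:twoircles} in the Nevanlinna case --- provides $\lambda\in\TT$ with $\tilde f=\lambda\tilde g$, i.e.\ $f\circ\psi=\lambda(g\circ\psi)$ on $\DD$; since $\psi$ is onto, $f=\lambda g$.

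The one substantive point is the membership $f\circ\psi\in H^1$, which is the classical fact that a disc automorphism induces a bounded composition operator on $H^p$. I would record it via the substitution $e^{is}=\psi(e^{it})$, under which $dt=|(\psi^{-1})'(e^{is})|\,ds$ with
\[
\frac{1-|\alpha|}{1+|\alpha|}\ \le\ |(\psi^{-1})'(e^{is})|\ \le\ \frac{1+|\alpha|}{1-|\alpha|},
\]
so that $\|h\circ\psi\|_{H^1}\le\frac{1+|\alpha|}{1-|\alpha|}\|h\|_{H^1}$ for $h\in H^1$; that $h\circ\psi$ is genuinely in $H^1$ (and not just that its boundary integral is finite) follows because a harmonic majorant $u$ of $|h|$ gives the harmonic majorant $u\circ\psi$ of $|h\circ\psi|$. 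For the Nevanlinna class I would write $f=f_1/f_2$ with $f_1,f_2\in H^1$ and apply the $H^1$ statement to each factor, using $f\circ\psi=(f_1\circ\psi)/(f_2\circ\psi)$.

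I expect the main obstacle to be purely bookkeeping rather than a genuine difficulty: one must check carefully that the nontangential boundary values of $f\circ\psi$ are the $\psi$-pullback of those of $f$ (so that the hypothesis $|f|=|g|$ a.e.\ on $\TT$ really transfers to $\tilde f,\tilde g$), and that $\psi^{-1}(X)\subset r\TT$ remains infinite. Both are immediate from $\psi$ being a Möbius map that is a homeomorphism of $\overline{\DD}$ and holomorphic with non-vanishing derivative on a neighbourhood of $\TT$. No new phase-retrieval input beyond Theorem~\ref{thm:infsub} is required.
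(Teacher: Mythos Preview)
Your proposal is correct and follows exactly the paper's approach: compose $f$ and $g$ with the automorphism $\Poa$ and apply Theorem~\ref{thm:infsub} to the compositions. The paper's proof is a one-line version of what you wrote; your additional justifications (that composition with a disc automorphism preserves $H^1$ and $\GN$, that boundary values and null sets transfer under $\psi|_{\TT}$, and that $\psi^{-1}(X)\subset r\TT$ is infinite) are the natural details behind that line.
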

\begin{proof}
We apply Theorem~\ref{thm:infsub} to $f\circ \Poa$ and $g\circ \Poa$, which implies  that $f\circ  \Poa =g\circ  \Poa$, and thus $f=g$ since $ \Poa$ is an automorphism of the open unit disc. 
\end{proof}

Note that every circle contained in $\DD$ can be written
as $ \Poa(r\TT)$ for 
a suitable choice of $\omega \in \TT$, $\alpha \in \DD$, and $0<r<1$.

\begin{rem}
It is clearly not enough to suppose that
 $|f|=|g|$ a.e.\ on $\TT$ and 
on some finite subset  $X \subset r\TT$. For example, we can let $f$ and $g$ be two different inner functions
 constructed in the following way:
  \[f=\Psi_\alpha \circ(Bu)\mbox{ and }g=\Psi_\alpha \circ (Bv),\]
where $B$ is the finite Blaschke product associated with the finite set $X$, $u$ and $v$ are inner functions with $u\neq cv$ for any $c\in\TT$ and $\Psi_\alpha (z):=\frac{\alpha -z}{1-\overline{\alpha}z}$ for an arbitrary  
$\alpha\in\DD$.  Since $f$ and $g$ are inner, their radial limits are of modulus one almost everywhere on $\TT$, $f(z)=\alpha =g(z)$ for all $z\in X$.  The choice of $u$ and $v$ implies that there is no $c\in\TT$ such that $f=cg$.

\end{rem}

If the inner parts of $f$ and $g$ are finite Blaschke products $B_1$ and $B_2$, then, depending on their degrees, $|f|=|g|$ a.e.\ on $\TT$ and at only finitely many
points on $r\TT$ is a sufficient condition for Theorem \ref{thm:infsub} to hold.

\begin{thm}\label{thm:BPpolyeq}
Let $B_1$ and $B_2$ be Blaschke products of degrees $M$ and $N$, respectively
and take $0<r<1$.
If $|B_1(z)|=|B_2(z)|$ for more than $2N+2M-1$ distinct points on $r\TT$, then
$B_2$ is a constant multiple of $B_1$.
\end{thm}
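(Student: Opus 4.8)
The plan is to convert the modulus identity on $r\TT$ into a polynomial root count. Write $B_1 = c_1\prod_{j=1}^M b_{a_j}$ and $B_2 = c_2\prod_{k=1}^N b_{b_k}$ with $b_\alpha(z)=(z-\alpha)/(1-\overline\alpha z)$, where $|c_1|=|c_2|=1$ and $|a_j|,|b_k|<1$. On $r\TT$ one has $\overline z = r^2/z$, so substituting this into $|B_1(z)|^2=\prod_{j=1}^M \frac{(z-a_j)(\overline z-\overline a_j)}{(1-\overline a_j z)(1-a_j\overline z)}$ and clearing the resulting $1/z$ factors from each factor gives
\[
|B_1(z)|^2 \;=\; \prod_{j=1}^M \frac{(z-a_j)(r^2-\overline a_j z)}{(1-\overline a_j z)(z-a_j r^2)} \;=:\; \frac{N_1(z)}{D_1(z)} \;=:\; R_1(z) \qquad (z\in r\TT),
\]
and likewise $|B_2|^2 = R_2 = N_2/D_2$ on $r\TT$. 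Since $|B_1|=|B_2|$ at more than $2N+2M-1$ points of $r\TT$, the polynomial $P:=N_1D_2-N_2D_1$ vanishes at at least $2N+2M$ distinct points; these are genuine zeros of $P$ because every root of $D_1D_2$ has modulus $0$, or $>1$, or $<r^2$, hence not $r$.

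The key step is the degree bound $\deg P\le 2N+2M-1$, which forces $P\equiv 0$. Here one observes that $N_1$ and $D_1$ have the same leading coefficient and the same constant term (the two products differ only in the middle coefficients of their linear factors); equivalently, if $k_1$ denotes the order of the zero of $B_1$ at $0$, then $\deg N_1=\deg D_1=2M-k_1$ and $\mathrm{lead}(N_1)/\mathrm{lead}(D_1)=r^{2k_1}$, so that $R_1(\infty)=r^{2k_1}$, and similarly for $B_2$ with an order $k_2$. Thus $N_1D_2$ and $N_2D_1$ both have degree $2(M+N)-(k_1+k_2)$ with leading coefficients $r^{2k_1}c$ and $r^{2k_2}c$ for a common $c\ne0$: if $k_1=k_2$ these cancel and $\deg P\le 2(M+N)-(k_1+k_2)-1\le 2N+2M-1$, while if $k_1\ne k_2$ then $k_1+k_2\ge1$ and $\deg P = 2(M+N)-(k_1+k_2)\le 2N+2M-1$. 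Either way $P\equiv0$, so $R_1\equiv R_2$ as rational functions; in particular $|B_1|=|B_2|$ everywhere on $r\TT$. Since also $|B_1|\equiv|B_2|\equiv1$ on $\TT$, Theorem~\ref{thm:twoircles} applied to $f=B_1$, $g=B_2$ yields $B_2=\lambda B_1$ for some $\lambda\in\TT$.

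The main obstacle is the borderline nature of this degree count: a naive estimate gives only $\deg P\le 2(M+N)$, one unit too weak, so the coincidence of the leading (and constant) coefficients of $N_i$ and $D_i$ is genuinely needed. A secondary point requiring care is zeros at the origin: if $B_1$ and $B_2$ vanish there to different orders then $B_2=\lambda B_1$ cannot hold, but in that case the degree count is even sharper ($\deg P\le 2(M+N)-2$), so the hypothesis silently excludes that configuration.
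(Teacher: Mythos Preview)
Your proof is correct and follows essentially the same route as the paper's: rewrite $|B_i|^2$ on $r\TT$ as a rational function in $z$ via $\bar z=r^2/z$, observe that the top-degree coefficients of $N_1D_2$ and $N_2D_1$ coincide so that $P=N_1D_2-N_2D_1$ has degree at most $2M+2N-1$, and then invoke Theorem~\ref{thm:twoircles}. You are in fact more careful than the paper in tracking possible zeros of $B_1,B_2$ at the origin (the paper's leading-coefficient computation $(-1)^{M+N}\prod\overline{\alpha_i}\prod\overline{\beta_j}$ tacitly assumes none); the minor slip in your final parenthetical---when $k_1\ne k_2$ one only gets $\deg P\le 2(M+N)-1$, not $2(M+N)-2$---is harmless since the required bound still holds.
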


\beginpf
Note that on $r\TT$ we have $\bar z=r^2/z$;
so for an elementary Blaschke factor with $a \in \DD$
we have
\[
\frac{z-a}{1-\bar a z}\frac{\bar z-\bar a}{1-a\bar z}=
\frac{(z-a)(r^2-\bar a z)}{(1-\bar a z)(z-r^2 a)}
\] and thus
the equation
\[
B_1(z)\overline{B_1(z)}=B_2(z) \overline {B_2(z)}
\]
can be rewritten as
$
R_1(z)=R_2(z)$, where $R_1$ and $R_2$ are rational functions of degrees $2M$ and $2N$
respectively. 
If $\alpha_1,\cdots , \alpha_M$ are the zeroes of $B_1$ and $\beta_1,\cdots, \beta_N$ are the zeroes of $B_2$, we get 
\[ R_1(z)=\left(\frac{z-\alpha_1}{1-\overline{\alpha_1} z}\right) \cdots \left( \frac{z-\alpha_M}{1-\overline{\alpha_M} z}\right) \left( \frac{r^2-\overline{\alpha_1}z}{z-\alpha_1 r^2}\right) \cdots \left( \frac{r^2-\overline{\alpha_M}z}{z-\alpha_M r^2}\right) \] 
and 
\[ R_2(z)=\left(\frac{z-\beta_1}{1-\overline{\beta_1} z}\right) \cdots \left( \frac{z-\beta_N}{1-\overline{\beta_N} z}\right) \left( \frac{r^2-\overline{\beta_1}z}{z-\beta_1 r^2}\right) \cdots \left( \frac{r^2-\overline{\beta_N}z}{z-\beta_N r^2}\right).\] 
Denote by $P_1(z)$ (resp. $P_2(z)$) the numerator of $R_1(z)$ (resp. $R_2(z)$)  and $Q_1(z)$ (resp. $Q_2(z)$) the denominator of $R_1(z)$ (resp. $R_2(z)$).  
This reduces to a polynomial equation of degree at most $2N+2M-1$, noting that the coefficient of $z^{2N+2M}$ of the polynomial $P_1Q_2$  is $(-1)^{N+M} \prod_{i=1}^{M} \overline{\alpha_i }\prod_{j=1}^N \overline{\beta_j}$, which coincide with the coefficient of  $z^{2N+2M}$ of the polynomial $P_2Q_1$.
Therefore if  $P_1Q_2-P_2 Q_1$it has more than $2N+2M-1$ roots  then it is identically zero.
That is, $|B_1|=|B_2|$ on the whole of $r\TT$ and thus the result follows
from the previous discussions.
\endpf

In general we have the following parametrization of  functions of equal modulus
on $r\TT$.

\begin{thm}\label{thm:paramrt}
Suppose that $f,g \in \Hol(\DD)$ and $|f|=|g|$ on $r\TT$ for some
$0<r<1$. Then there exist finite Blaschke products $B_1,B_2$ such
that
\[
B_1(z/r) f(z)=B_2(z/r) g(z).
\]
\end{thm}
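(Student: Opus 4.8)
The plan is to show that $F:=f/g$ is a \emph{rational} function and then, after the change of variable $w=z/r$, that it has modulus one on $\TT$; the standard description of rational functions that are unimodular on the circle then yields the Blaschke factorisation. This is in the spirit of the proof of Theorem~\ref{thm:twoircles}, but since we are not assuming equality on $\TT$ we cannot begin by reducing to inner functions, so a reflection argument across $r\TT$ takes its place.

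First I would set aside the trivial case $g\equiv 0$ (which forces $f\equiv 0$ and makes the statement vacuous) and assume $f,g\not\equiv 0$. For a holomorphic function $h$ write $h^*(z)=\overline{h(\bar z)}$, which is again holomorphic. On $r\TT$ one has $\bar z=r^2/z$, so $|f(z)|^2=f(z)\overline{f(z)}=f(z)f^*(r^2/z)$, and likewise for $g$. Since $|f|=|g|$ on $r\TT$, the two functions $z\mapsto f(z)f^*(r^2/z)$ and $z\mapsto g(z)g^*(r^2/z)$, both holomorphic on the annulus $\{r^2<|z|<1\}$ (there $|r^2/z|<1$), agree on $r\TT$ and hence, by the identity theorem, on the whole annulus. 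Therefore, on that annulus,
\[
F(z)=\frac{f(z)}{g(z)}=\frac{g^*(r^2/z)}{f^*(r^2/z)}.
\]
The left-hand side is meromorphic on $\DD$, while the right-hand side is meromorphic on $\{|z|>r^2\}\cup\{\infty\}$. These two open sets cover $\CC\cup\{\infty\}$, and the two expressions coincide on the overlap annulus, so they glue to a meromorphic function on the whole sphere; that is, $F$ is rational. Equivalently, $F(z)F^*(r^2/z)=1$ identically.

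Next I would put $G(w):=F(rw)$, again rational, and rewrite $F(z)F^*(r^2/z)=1$ as $G(w)G^*(1/w)=1$. On $\TT$ we have $1/w=\bar w$, hence $G^*(1/w)=\overline{G(w)}$ and so $|G(w)|=1$ on $\TT$; in particular $G$ has no zero or pole on $\TT$. It is then classical that such a $G$ is a quotient of two finite Blaschke products: dividing $G$ by the finite Blaschke product whose zeros are the zeros of $G$ in $\DD$ and multiplying by the one whose zeros are the poles of $G$ in $\DD$ produces a rational function $H$ with no zeros or poles in $\overline\DD$; applying $H(w)H^*(1/w)=1$ shows that $H$ has no zeros or poles in $\widehat\CC\setminus\overline\DD$ either, so $H$ is a unimodular constant. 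Absorbing that constant gives $G=B_2/B_1$ with $B_1,B_2$ finite Blaschke products, and undoing the substitution yields $F(z)=B_2(z/r)/B_1(z/r)$, i.e.\ $B_1(z/r)f(z)=B_2(z/r)g(z)$ as an identity of meromorphic functions on $\DD$.

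The step I expect to need the most care is the passage from ``$f/g$ meromorphic on $\DD$ and unimodular on $r\TT$'' to ``$f/g$ rational'': the reflection $\bar z=r^2/z$ on $r\TT$ is precisely what rules out infinitely many poles of $f/g$ accumulating at $\TT$, and one must keep track of the behaviour at $\infty$ and of the (harmless) possibility that $f$ or $g$ vanishes at $0$. The remaining ingredients — the identity theorem, the reflection-principle characterisation of rational functions of modulus one on $\TT$, and the final change of variable — are routine.
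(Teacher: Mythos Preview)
Your argument is correct, but it takes a longer road than the paper's. The paper simply observes that, since $f,g\in\Hol(\DD)$ and $r\overline\DD$ is compact in $\DD$, the quotient $f/g$ is meromorphic on a full neighbourhood of $r\overline\DD$ with only finitely many poles there; multiplying by a suitable $\tilde B_1(z)=B_1(z/r)$ clears those poles, and the resulting function, holomorphic near $r\overline\DD$ and unimodular on $r\TT$, is automatically a finite Blaschke product $\tilde B_2(z)=B_2(z/r)$. No reflection is needed, because the ``poles accumulating at $\TT$'' scenario you guard against is irrelevant once one works only on the smaller disc.

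What your reflection argument buys is the stronger statement that $f/g$ is in fact rational on the whole Riemann sphere, hence that $f$ and $g$ can have only finitely many uncancelled zeros in all of $\DD$. That is a genuine extra piece of information (implicit in the conclusion, but not made explicit in the paper's proof), and your route would be the natural one if $f,g$ were given only on $r\DD$ rather than on $\DD$. For the theorem as stated, though, the paper's approach is more economical.
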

\beginpf
We have that $f/g$ is meromorphic on a neighbourhood of $r\overline\DD$ and
$|f/g|=1$ on $r\TT$. 
By choosing a suitable  finite Blaschke product $B_1$
and writing $\tilde B_1(z)=B_1(z/r)$
we have that $\tilde B_1 f/g$
is still unimodular on $r\TT$ and holomorphic on a neighbourhood of $r\overline \DD$.
Thus it has the form $\tilde B_2$, where $\tilde B_2(z)=B_2(z/r)$ for some 
finite Blaschke product $B_2$.
\endpf

\subsection{Disjoint lines}

In \cite{PLB17} it is claimed that a similar result
to Theorem  \ref{thm:twoircles} can be proved for the corresponding space $H^1_R(\CC^+)$
of analytic functions in the upper half-plane $\CC^+$, 
considering the values on $\RR$ and $i+\RR$,
although no proof is given. The result cannot be deduced directly from the disc
result as this strip
$\MS=\{s \in \CC: 0 < \re s < 1\}$
 is not conformally equivalent to the annulus.

We note also that there are non-constant meromorphic functions $F$
defined on $\CC^+$ such that $|F|=1$ on both $\RR$ and $i+\RR$.
One example is 
\[
F(s)= \frac{i-\exp(\pi s)}{i+\exp(\pi s )},
\]
which provides a conformal map of the strip $\MS$
onto the unit disc. However the function does not extend to a quotient
of $H^p$ functions, since its zeros 
$\{i(\frac12+2n): n \ge 0\}$
do  not form a Blaschke sequence.

By means of the Weierstrass factorization theorem, $F$ can be modified
to give distinct analytic  functions $G$ and $H$ on $\CC^+$  such that
$|G|=|H|$ on $\RR$ and $1+i\RR$.\\

In fact, the result we require does hold for the half-plane. It will be convenient to work
with the right half-plane $\CC_+$.

\begin{thm}\label{thm:halfp}
Suppose that $f,g \in H^1(\CC_+)$ and $|f|=|g|$ a.e.\ on $i\RR$, while
 $|f|=|g|$ on $1+i\RR$. Then $f=cg$ for some
 unimodular constant $c$.
 \end{thm}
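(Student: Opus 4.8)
The plan is to reduce the half-plane statement to the disc result, Theorem~\ref{thm:twoircles}, by exploiting the same feature that made the strip difficult: although $\MS$ is not conformally equivalent to an annulus, the issue is only about the \emph{existence} of exotic meromorphic maps, and once we bring $H^1$ hypotheses into play those maps are excluded. First I would dispose of the common outer part. Writing $f=I_f O_f$ and $g=I_g O_g$ for the inner--outer factorizations in $H^1(\CC_+)$, the equality $|f|=|g|$ a.e.\ on the boundary line $i\RR$ forces $O_f=O_g$ (up to a unimodular constant), so after dividing we may assume $f$ and $g$ are inner functions on $\CC_+$. Then $F:=f/g$ is a meromorphic function on $\CC_+$, of bounded type, unimodular a.e.\ on $i\RR$, and by hypothesis $|F|=1$ everywhere on the interior line $1+i\RR$.

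Next I would transfer to the disc. The map $\psi(s)=\dfrac{1-s}{1+s}$ sends $\CC_+$ conformally onto $\DD$, $i\RR$ onto $\TT$, and the vertical line $1+i\RR$ onto a circle interior to $\DD$ passing through the point $\psi(1)=0$; call it $\Gamma:=\psi(1+i\RR)$. So $F\circ\psi^{-1}$ is a function of bounded type on $\DD$, hence a quotient of two $H^1$ functions, unimodular a.e.\ on $\TT$ and on $\Gamma$. The circle $\Gamma$ lies inside $\DD$ and, by the remark following Corollary~\ref{cor:offcentre}, can be written as $\Poa(r\TT)$ for suitable $\omega\in\TT$, $\alpha\in\DD$, $0<r<1$. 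Composing once more with $\Poa$ and invoking Corollary~\ref{cor:offcentre} (the Nevanlinna-class version, applied to the numerator and denominator of $F\circ\psi^{-1}$, or more directly to $f\circ\psi^{-1}$ and $g\circ\psi^{-1}$), we conclude $f\circ\psi^{-1}$ and $g\circ\psi^{-1}$ agree up to a unimodular constant, hence so do $f$ and $g$.

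The main obstacle, and the step needing genuine care, is verifying that $F\circ\psi^{-1}$ really is of bounded type on the whole disc, i.e.\ a quotient of $H^1$ functions --- equivalently that $f\circ\psi^{-1}$, $g\circ\psi^{-1}$ lie in the Nevanlinna class $\GN$. This is exactly where the counterexample $F(s)=(i-e^{\pi s})/(i+e^{\pi s})$ before the theorem is relevant: that $F$ is unimodular on both lines but is \emph{not} of bounded type (its zero set $\{i(\tfrac12+2n)\}$ violates the half-plane Blaschke condition $\sum \re(1/(1+s_n))<\infty$), so it does not arise from $H^1(\CC_+)$ functions. Under the hypothesis $f,g\in H^1(\CC_+)$, the Smirnov-class theory gives that $f,g$ are of bounded type on $\CC_+$ and pull back under $\psi^{-1}$ to bounded-type functions on $\DD$; the standard dictionary between $H^p(\CC_+)$ and $H^p(\DD)$ (with the Jacobian weight $(1+s)^{-2/p}$) shows the pulled-back functions lie in $\GN$, which is all that Corollary~\ref{cor:offcentre} requires. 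Once this membership is in hand, the rest is the routine conformal bookkeeping sketched above, and the non-constancy obstruction for rational $F$ --- zeros and poles not at reflected points --- is already packaged inside Theorem~\ref{thm:twoircles} via Corollary~\ref{cor:offcentre}.
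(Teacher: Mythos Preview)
Your reduction has a genuine gap at the conformal step. Under $\psi(s)=\dfrac{1-s}{1+s}$ the line $1+i\RR$ does \emph{not} go to a circle contained in $\DD$: a short computation gives
\[
\psi(1+it)=\frac{-it}{2+it}=\frac{-t^{2}-2it}{4+t^{2}},
\]
and one checks that $\bigl|\psi(1+it)+\tfrac12\bigr|=\tfrac12$ for all $t\in\RR$. Thus $\Gamma=\psi(1+i\RR)$ is the circle $\{z:|z+\tfrac12|=\tfrac12\}$, which is internally \emph{tangent} to $\TT$ at $-1=\psi(\infty)$. Since every $\Poa(r\TT)$ with $0<r<1$ is a compact subset of $\DD$ (disc automorphisms send $\DD$ to $\DD$), the tangent circle $\Gamma$ can never be written in that form, and Corollary~\ref{cor:offcentre} simply does not apply. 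This is precisely the obstruction the paper flags before the theorem (``the strip $\MS$ is not conformally equivalent to the annulus''): conformally, the two-line configuration lands on $\TT$ together with an \emph{internally tangent} circle, and the only result in the paper covering that configuration is Corollary~\ref{cor:fin}, which is \emph{derived from} Theorem~\ref{thm:halfp}. So invoking it here would be circular.

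The paper's proof avoids this by working directly in $\CC_+$: after reducing to inner $f,g$, it translates by $1$ to put the interior line on the imaginary axis, analyses the inner factor of $f(1+\cdot)$ and shows it must be $B_1(s)e^{-\alpha s}$ with $B_1$ a Blaschke product whose zeros do not accumulate on $i\RR$; then a reflection argument (a zero at $w$ forces zeros at $w+2n$ for all $n$, violating the Blaschke condition) kills $B_1$, and unimodularity on $i\RR$ kills the exponential. That zero-propagation step is the genuine replacement for the missing ``automorphism to $r\TT$'' move, and your proposal does not supply anything playing that role.
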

 
 \beginpf
 
 We may assume without loss of generality that $f,g$ are inner.
 
 Now, the functions $F$ and $G$ defined by $F(s)=f(1+s)$ and $G(s)=g(1+s)$
 satisfy
 $|F|=|G|$ on $ i\RR$; and these functions
 are also in $H^1(\CC_+)$, and holomorphic in a neighbourhood of $i\RR$.
 Let us consider the inner--outer factorization $F=uh$, say.
 
  Clearly, the inner factor $u$ has no zeros accumulating at a point on $i\RR$.
  Moreover, the outer factor $h$ is continuous on the closed half-plane $\CC_+$
  (see, for example, the arguments in \cite[Sec.\ 4.3.8]{nik}), which means
  that the inner factor $u$ is also continuous except possibly at the discrete
  set consisting of the
  zeros of $uh$ on $i\RR$. 
On factoring out a zero at $iy_0$, by
considering $uh/b$, where $b(s)= \dfrac{s-iy_0}{1+s}$ (an outer function)
we see that $u$ is also continuous at the zeros of $uh$. Thus $u$ has no singular part
except possibly an exponential factor $e^{-\alpha s}$ for some $\alpha \ge 0$.
We conclude that $u(s)=B_1(s)e^{-\alpha s}$ 
where $B_1$ is a  Blaschke product   whose zeros do not accumulate at any point 
in $\ol{\CC_+}$. The function
$B_1$, which is a product of factors of the form $\dfrac{s-a_n}{s+ \ol{a_n}}$,
therefore has a meromorphic extension to $\CC$.
 
So suppose that $B_1$ has a zero at $w$. Then 
the Blaschke factor of $f$, say, $b_1$,  has a zero at $w+1$, and hence
a pole at $-\ol w-1$. Thus $B_1$ has a pole at $ -\ol w-2$ and hence a zero at $w+2$.
 
Repeating this argument, we find that $B_1$ has zeros at $w_n:=w+2n$, $n \in \NN$. But these do not satisfy
the Blaschke condition $\sum_{n=0}^\infty \frac{\re w_n}{1+|w_n|^2} < \infty$,
and that is a contradiction.
 
 So $B_1$ is a unimodular constant and
 we may apply a similar argument to the inner function $v$.
We conclude that  
 $f(s)/g(s)= c\exp(\lambda(s-1))$, 
 for a unimodular constant $c$,
 which, since
 $f$ and $g$ are inner,  means that 
 $|\exp(\lambda(iy-1))|=1$ a.e., and $\lambda=0$.
 \endpf
 
 The translation of the previous theorem into the open unit disc via a standard conformal map between the unit disc and the right half-plane is the following. 
 
\begin{cor}\label{cor:fin}
	Suppose that $f,g \in H^1(\DD)$ and $|f|=|g|$ a.e.\ on $\TT$, while
	$|f|=|g|$ on $\{z\in\CC: |z-r|=1-r\}$ for some $r\in (0,1)$. Then $f=cg$ for some
	unimodular constant $c$.
\end{cor}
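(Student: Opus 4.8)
The plan is to transport everything from the disc to the right half-plane and apply Theorem~\ref{thm:halfp}. The key geometric remark is that $C_r:=\{z\in\CC:\ |z-r|=1-r\}$ is the circle internally tangent to $\TT$ at the point $z=1$; hence a Möbius map sending $z=1$ to $\infty$ carries $\TT$ to a straight line and $C_r$ to a line parallel to it, which is precisely the configuration handled by Theorem~\ref{thm:halfp}.

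Concretely, I would use the conformal equivalence $\phi(s)=\dfrac{s-1}{s+1}$ of $\CC_+$ onto $\DD$, which satisfies $\phi(1)=0$, $\phi(\infty)=1$ and $\phi(i\RR)=\TT$. For $a>0$ the vertical line $\{\re s=a\}\subset\CC_+$ passes through $\infty$ but not through the pole $s=-1$ of $\phi$, so its image is a circle internally tangent to $\TT$ at $1$ and passing through $\phi(a)=\frac{a-1}{a+1}\in\RR$; symmetry about $\RR$ forces this to be the circle with diameter $[\frac{a-1}{a+1},\,1]$, i.e.\ the circle of centre $\frac{a}{a+1}$ and radius $\frac1{a+1}$, which equals $C_r$ exactly when $a=\frac{r}{1-r}$. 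So, with $a:=\frac{r}{1-r}\in(0,\infty)$, we have $\phi(i\RR)=\TT$ and $\phi(\{\re s=a\})=C_r$.

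Given $f,g\in H^1(\DD)$, I would then pass to $F:=(f\circ\phi)\,\phi'$ and $G:=(g\circ\phi)\,\phi'$. By the classical identification of $H^1(\DD)$ with $H^1(\CC_+)$ under a conformal equivalence --- the Jacobian weight $\phi'$ being exactly what preserves membership --- we get $F,G\in H^1(\CC_+)$, with boundary values on $i\RR$ given a.e.\ by $|F|=|f\circ\phi|\,|\phi'|$ and $|G|=|g\circ\phi|\,|\phi'|$, and the same identity holding at every point of the line $\{\re s=a\}\subset\CC_+$ (where $f\circ\phi$ and $g\circ\phi$ are holomorphic, since $C_r\subset\DD$). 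Because $|\phi'|$ is finite and nonzero on $i\RR$ away from the single point at infinity, and on $\{\re s=a\}$, the hypotheses $|f|=|g|$ a.e.\ on $\TT$ and $|f|=|g|$ on $C_r$ become $|F|=|G|$ a.e.\ on $i\RR$ and $|F|=|G|$ on $\{\re s=a\}$. Rescaling --- replacing $F,G$ by $s\mapsto F(as)$ and $s\mapsto G(as)$, which remain in $H^1(\CC_+)$ and keep equality of moduli on $i\RR$ --- moves $\{\re s=a\}$ to $1+i\RR$, so Theorem~\ref{thm:halfp} applies and yields $F(as)=c\,G(as)$ for some $c\in\TT$. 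Undoing the rescaling gives $F=cG$; hence $f\circ\phi=c\,(g\circ\phi)$ on $\DD$, and since $\phi$ is surjective, $f=cg$.

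I do not expect a real obstacle here, as this is a routine conformal translation; the only points needing a little care are keeping the Jacobian factor $\phi'$ so that $F$ and $G$ genuinely lie in $H^1(\CC_+)$ rather than merely in $\Hol(\CC_+)$ (while noting that it cancels in every comparison of moduli), and verifying that ``a.e.\ on $\TT$'' transports to ``a.e.\ on $i\RR$'', which holds because $\phi$ restricts to a diffeomorphism between these boundary curves off a single point. Alternatively, one may first divide $f$ and $g$ by their common outer factor (legitimate since $|f|=|g|$ a.e.\ on $\TT$ and that factor is zero-free on $C_r\subset\DD$) to reduce to inner functions, for which composition with $\phi$ needs no Jacobian correction; the price is then to invoke Theorem~\ref{thm:halfp} in the form actually established by its proof, working with the meromorphic quotient $f/g$ in place of a pair of $H^1$ functions.
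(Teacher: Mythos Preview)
Your proposal is correct and follows exactly the approach indicated by the paper, which merely states that the corollary is the translation of Theorem~\ref{thm:halfp} to the disc via a standard conformal map between $\DD$ and $\CC_+$. You supply the explicit map, the identification of $C_r$ with the image of a vertical line $\{\re s=a\}$, the $H^1$-preserving Jacobian factor, and the rescaling to reach $1+i\RR$, all of which the paper leaves implicit.
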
	

\begin{rem}
 Corollary~\ref{cor:fin} also holds  when $f$ and $g$ are in thee Nevalinna class $\mathcal N$ and when 
 	$|f|=|g|$ on $\TT$ and on a sequence included in $\{z\in\CC: |z-r|=1-r\}$.
 \end{rem}
 
 \subsection{Two circles contained in the unit disc}
 
 Suppose now that $f$ and $g$ are holomorphic on $\DD$ and $|f|=|g|$ on two distinct circles
 $C_1,C_2$ contained in $\DD$, which bound open discs $D_1$ and $D_2$ respectively.
 There are five cases to consider (we can of course swap the roles of $C_1$ and $C_2$
 if we wish):
 \begin{enumerate}
 \item $C_1$ and $C_2$ are ``internally'' disjoint with $C_2 \subset D_1$;
\item $C_1$ and $C_2$ are ``externally'' disjoint with $C_2 \subset \DD \setminus \ol{D_1}$;
\item  $C_1$ and $C_2$ are ``internally'' tangential with $C_2 \subset \ol{D_1}$;
\item  $C_1$ and $C_2$ are ``externally'' tangential with $C_2 \subset \DD \setminus D_1$;
\item $C_1$ and $C_2$ intersect in two points, at which they make an angle $\theta \in (0,\pi/2]$.
\end{enumerate}
	\begin{center}
	\includegraphics[width=12cm]{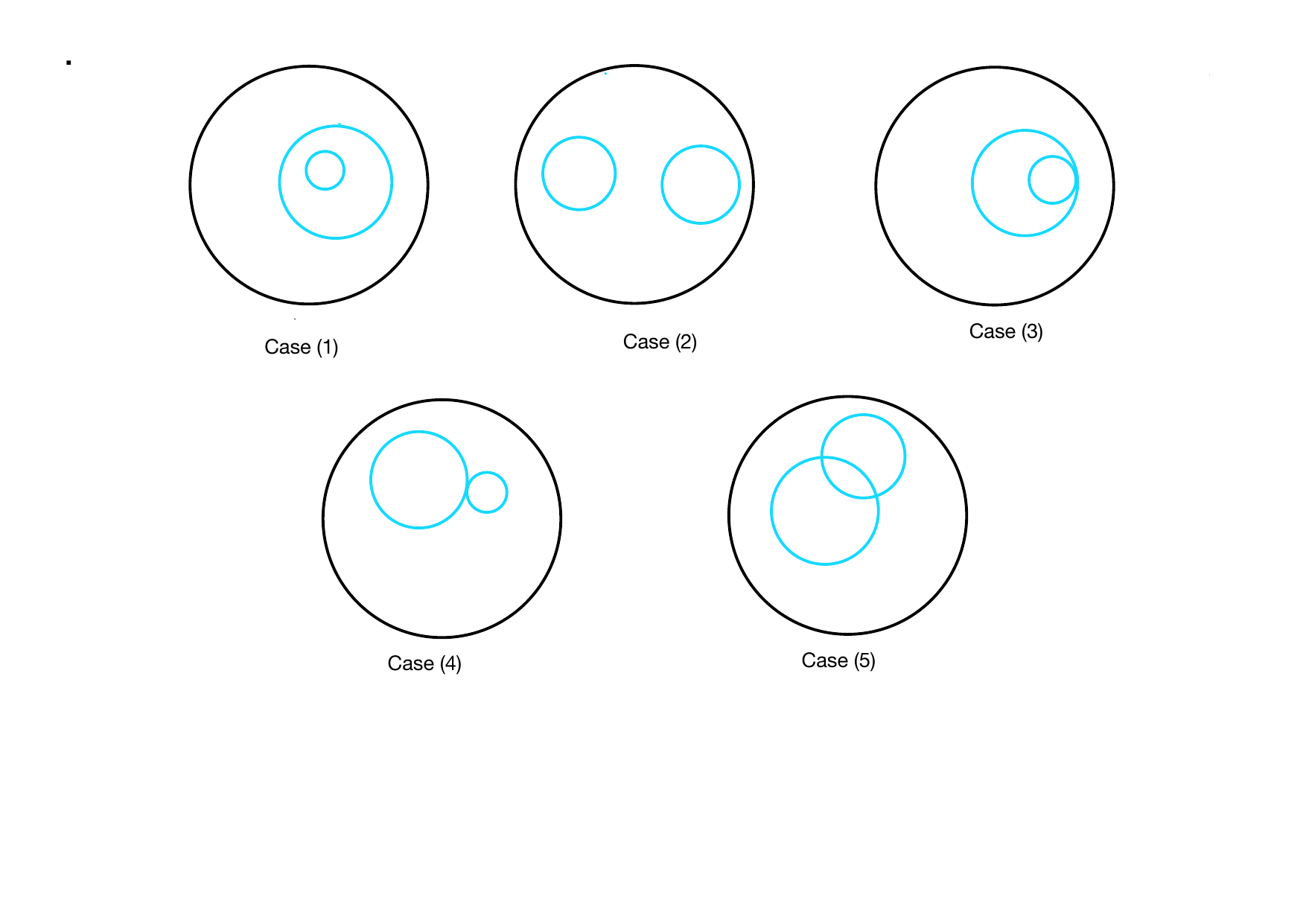}
	
\vspace{-2cm}	The five cases for two circles contained in $\DD$. 
\end{center}
\begin{thm}\label{thm:fivecases}
Suppose that $C_1$ and $C_2$ satisfy one of the five conditions listed above
and that $|f|=|g|$ on $C_1 \cup C_2$. Then in cases (1)--(4)
$f=cg$ for some $c$ with $|c|=1$. The same holds in case (5) if $\theta$ is
an irrational multiple of $\pi$, but need not hold if $\theta$ is a rational
multiple of~$\pi$.
\end{thm}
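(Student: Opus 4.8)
The plan is to reduce, in every configuration, to analysing the single quotient $h:=f/g$. If $g\equiv 0$ then $|f|=0$ on $C_1\cup C_2$ forces $f\equiv 0$ and there is nothing to prove, so assume $g\not\equiv 0$; then $h$ is meromorphic on $\DD$. On a neighbourhood of each arc, $f$ and $g$ have zeros of equal order (since $|f|=|g|$ there), so $h$ is holomorphic and nonvanishing near $C_1\cup C_2$ with $|h|=1$ on $C_1\cup C_2$. Thus the whole theorem amounts to: show $h$ is a unimodular constant in cases (1)--(4) and in case (5) with $\theta\notin\pi\QQ$, and produce a non-constant $h$ with $|h|=1$ on $C_1\cup C_2$ in case (5) with $\theta\in\pi\QQ$. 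Each pair $(C_1,C_2)$ will be put into a normal form by a Möbius transformation $\phi$ of $\widehat\CC$; note that $C_i\subset\DD$ forces $\overline{D_i}\subset\DD$, which controls the shape of $\phi(\DD)$.

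For the disjoint cases (1) and (2) I would use the classical fact that two disjoint circles can be mapped to concentric circles $r_1\TT$ and $r_2\TT$ with $r_1\neq r_2$. In case (1) (nested circles) one checks that $\phi(\DD)$ contains the closed disc $\{|w|\le r_2\}$ together with a full neighbourhood of $r_2\TT$; restricting $F:=f\circ\phi^{-1}$ and $G:=g\circ\phi^{-1}$ to $\{|w|<r_2\}$ and rescaling gives functions in $H^\infty(\DD)\subset H^1(\DD)$ with equal modulus on $\TT$ and on $(r_1/r_2)\TT$, so Theorem~\ref{thm:twoircles} applies and, after analytic continuation, $f=cg$. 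In case (2) the domain $\phi(\DD)=\widehat\CC\setminus K$ is the sphere minus a closed ``blob'' $K$ lying in the open annulus $\{r_1<|w|<r_2\}$, so one cannot simply quote Theorem~\ref{thm:twoircles}. Instead, writing $\sigma_j(w)=r_j^2/\bar w$ for the inversion in $r_j\TT$, the Schwarz reflection principle gives $h(w)=1/\overline{h(\sigma_j(w))}$ first near $r_j\TT$ and then, since removing the pairwise disjoint blobs $K$ and their reflections leaves a connected set, on a large region; composing the two identities yields $h(\lambda w)=h(w)$ with $\lambda=(r_2/r_1)^2\neq 1$, valid on a punctured neighbourhood of $0$ (which lies in $\phi(\DD)$). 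Expanding $h$ in a Laurent series at $0$ then kills every coefficient except the constant one, so $h$ is constant and $f=cg$.

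For the tangential cases (3) and (4) I would send the point of tangency, which lies in $\DD$, to $\infty$; then $C_1,C_2$ become two parallel lines $\ell_1,\ell_2$ and $\phi(\DD)=\widehat\CC\setminus K$ is the complement of a bounded blob $K$ contained in the strip between them, so $\infty\in\phi(\DD)$. The two reflections in $\ell_1,\ell_2$ compose to a translation $w\mapsto w+\tau$ with $\tau\neq 0$, and as in case (2) the reflection identities combine to $h(w+\tau)=h(w)$ near $\infty$; since $h$ is meromorphic at $\infty$ its expansion there has only finitely many positive powers, and feeding periodicity through that expansion forces all non-constant terms to vanish, so $h$ is constant and $f=cg$. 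For case (5) with $\theta\notin\pi\QQ$ there is nothing new to do: circles are $C^1$ curves meeting at angle $\theta\in(0,\pi/2]\setminus\pi\QQ$, so Corollary~\ref{cor:c1curves} applies directly.

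Finally, in case (5) with $\theta$ a rational multiple of $\pi$ the conclusion genuinely fails, and it suffices to exhibit one configuration. Take $C_1,C_2$ to be two orthogonal circles inside $\DD$ meeting at points $a,b$, and let $m$ be a Möbius map with $m(a)=0$, $m(b)=\infty$, normalised so that $m(C_1)=\RR$ and $m(C_2)=i\RR$; then $m(z)^2$ is real on $C_1\cup C_2$, so
\[
F(z)=\frac{m(z)^2-2i}{m(z)^2+2i}=\frac{f(z)}{g(z)}
\]
is unimodular on $C_1\cup C_2$, where $f$ and $g$ denote the numerator and denominator after clearing $m$ (polynomials, hence entire and nonvanishing on $C_1\cup C_2$). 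Then $|f|=|g|$ on $C_1\cup C_2$ while $F$ is non-constant, so $f\neq cg$; this is the exact analogue of Example~\ref{ex:22}, and similar examples can be built for every rational multiple of $\pi$ by using $m(z)^q$ in place of $m(z)^2$. The step I expect to be the main obstacle is the bookkeeping in the reflection argument of cases (1)--(4): one must verify carefully that the identity $h\circ\gamma=h$, initially valid only on a neighbourhood of the normalised circles or lines, propagates through the multiply-connected domain $\widehat\CC\setminus K$ to a neighbourhood of the fixed point of $\gamma$ (the origin, respectively $\infty$) where the Laurent-series argument can be run.
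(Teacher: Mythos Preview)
Your proposal is correct and, for cases (2)--(4), takes a genuinely different route from the paper. Cases (1) and (5) match closely: the paper reduces (1) to Corollary~\ref{cor:offcentre} (built on Theorem~\ref{thm:twoircles}), and for (5) it straightens the circles via $w=1/(z-a)$ and applies Theorem~\ref{thm:first} on a small disc, with the same counterexample mechanism you describe. For (2)--(4), however, the paper proceeds structurally: it invokes Theorem~\ref{thm:paramrt} to write $f/g=B_2(z/r)/B_1(z/r)$ as a ratio of rescaled finite Blaschke products, and then argues that the zeros and poles of such a quotient must pair up as inverse points with respect to \emph{both} circles (or, after a further conformal map in cases (3)--(4), both lines), which is impossible unless the quotient is constant; case (3) is dispatched by a direct appeal to Corollary~\ref{cor:fin}. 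Your Schwarz-reflection-and-Laurent argument is more self-contained: it bypasses Theorem~\ref{thm:paramrt} and Corollary~\ref{cor:fin} entirely and treats (2), (3), (4) by a single mechanism (compose the two reflections to get a dilation or translation with a fixed point at $0$ or $\infty$, then kill the Laurent coefficients there), whereas the paper's version is terser because it recycles machinery already built in the preceding sections. One small correction to your geometric bookkeeping: in case (3) the blob $K=\phi(\widehat\CC\setminus\DD)$ lies in one of the two half-planes, not in the strip, since $\widehat\CC\setminus\DD\subset\widehat\CC\setminus\overline{D_1}$; this does not affect your argument, which only needs $K$ bounded and disjoint from $\ell_1\cup\ell_2$.
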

\beginpf
For parts (1)--(4) we may suppose without loss of generality,
by composing with an automorphism, that $C_1$ is the circle $r\TT$ centred at $0$
with radius $r$ for some $0<r<1$. For (1) we   now consider $f_r$ and $g_r$
defined by $f_r(z)=f(z/r)$ and $g_r(z)=g(z/r)$.
Then
(1) follows easily from Corollary \ref{cor:offcentre} and the comment following it.\\

For (2) with $C_1=r\TT$ we
note from Theorem \ref{thm:paramrt} that $f(z)/g(z)$ is a rational function of the form $B_2(z/r)/B_1(z/r)$ and hence meromorphic on $\CC \cup \{\infty\}$.
With the change of variable $w=z/r$ we have a meromorphic function $B_2(w)/B_1(w)$
that is unimodular on a smaller circle contained in $\DD$. By composition with an automorphism
we may supposed that the smaller circle is centred at $0$. But now the fact that $f/g$ is
a constant follows
an argument similar to that used in the proof of Theorem \ref{thm:twoircles} (specifically, that
poles and zeros occur in pairs of inverse points with respect to
one circle, which means that they cannot occur at inverse points with respect to
the other circle).\\

(3) is easily derived from Corollary \ref{cor:fin}.\\

For (4) we may again suppose that $C_1=r\TT$ and that $f(z)/g(z)=B_2(z/r)/B_1(z/r)$.
Now we take $w=z/r$ again, reducing to the case when a function
of the form $B_2(w)/B_1(w)$ is of modulus 1 on a circle internally tangential
to $\TT$. By means of a conformal mapping we
can transform this to the half-plane, producing a rational function
that has absolute value 1 on the lines $i\RR$ and $1+i\RR$.
Finally, an argument based on the fact that zeros and poles must occur at pairs of inverse points
$a$ and  $-\ol a$ with respect to $i\RR$ as well
as inverse points $ a$ and $-\ol a+2$ with respect to $1+i\RR$
(cf. the proof of Theorem \ref{thm:halfp})
 leads
to a contradiction unless $f/g$ is constant.\\

For (5), suppose that the two intersection points of $C_1$ and $C_2$ are $a$ and $b$.
By changing the variable to $w=1/(z-a)$ we transform the
circles into straight lines, which meet at $1/(b-a)$, still at an angle $\theta$.
The result for irrational $\theta/\pi$ now follows from Theorem \ref{thm:first}, applied to a small
disc centred at $1/(b-a)$.

We may similarly construct counterexamples for rational $\theta/\pi$.
Let $C_1$ and $C_2$ be circles of radius $1/3$ centered at $\pm 1/(3\sqrt{2})$.
Then they meet at $\pm a$, where $a=i/(3\sqrt 2)$. The angle between them
is a right angle.
The transformation $w= \dfrac{z+a}{z-a}$ sends the circles to two perpendicular lines,
meeting at $0$, from which  examples  similar to those in Example \ref{ex:22}
can be constructed easily.
\endpf

\begin{ex}
There are limitations to the ``inverse points'' argument above
if two circles are not concentric. For example, let $C_1=\{z \in \CC: |z-3/5|=1/5\}$
and $C_2=\{z \in \CC: |z+3/5|=1/5\}$. Then  the points $z_\pm:=\pm \sqrt{8}/5$ are inverse
with respect to both circles, and one can construct rational functions of the form
$c \dfrac{z-z_+}{z-z_-}$ that have constant absolute values on both
circles. However, they do not have the same absolute values on $C_1$ and $C_2$,
so do not provide a counterexample to the above result.
\end{ex}

Finally, we note that Theorem \ref{thm:fivecases} has sequential counterparts, which may be proved using the
methods of Subsection \ref{sec:2.1}. We omit the details.

\end{document}